\newtheorem{theorem}{Theorem}[section]
\newtheorem{proposition}[theorem]{Proposition}
\newtheorem{lemma}[theorem]{Lemma}
\theoremstyle{remark}
\newtheorem{remark}[theorem]{Remark}
\theoremstyle{definition}
\newtheorem{definition}[theorem]{Definition}
\newcommand{\mathsym}[1]{{}}
\newcommand{\unicode}[1]{{}}
\newcommand{\SL}{\mathit{SL}}
\newcommand{\SU}{\mathit{SU}}
\newcommand{\C}{\mathbb{C}}
\newcommand{\R}{\mathbb{R}}
\newcommand{\Q}{\mathbb{Q}}
\newcommand{\Z}{\mathbb{Z}}
\begin{document}

\title
[$\SL(2;\C)$-representations and Reidemeister torsion for homology 3-spheres]
{Some numerical computations on Reidemeister torsion for homology 3-spheres 
obtained by Dehn surgeries along the figure-eight knot}

\author{Teruaki Kitano}

\begin{abstract}
This is the second version of this article. 
There are several errors in the first one. 
They are corrected in this version. 
We show some computations on representations of the fundamental group in $\SL(2;\C)$ 
and Reidemeister torsion for a homology 3-sphere 
obtained by Dehn surgery along the figure-eight knot.
\end{abstract}

\thanks{2010 {\it Mathematics Subject Classification}.
57M25}

\thanks{{\it Key words and phrases.\/} 
$\SL(2;\C)$-representation, Reidemeister torsion, homology 3-sphere, Casson invariant, $\SL(2;\C)$-Casson invariant}

\address{Department of Information Systems Science, 
Faculty of Science and Engineering, 
Soka University, 
Tangi-cho 1-236, 
Hachioji, Tokyo 192-8577, Japan}

\email{kitano@soka.ac.jp}

\maketitle

\section{Introduction}

In this note 
we show some numerical computations of representations of the fundamental group 
in $\SL(2;\C)$ 
and Reidemeister torsion for the homology 3-sphere 
obtained by $1/n$-Dehn surgeries along the figure-eight knot. 
More precisely we enumerate all conjugacy classes of irreducible representations 
in $\SL(2;\C)$ 
and compute Reidemeister torsion for these representations.

Reidemeister torsion was originally defined by Reidemeister, Franz and de Rham in the 1930's. 
It can be defined in more general situation, 
but in this paper, 
we consider this invariant for a homology 3-sphere $M$ 
with an irreducible representation $\rho$ of the fundamental group $\pi_1(M)$ into $\SL(2;\C)$. 
It is denoted by $\tau_\rho(M)\in\C$. 

In the 1980's Johnson \cite{Johnson} developed a theory of Reidemeister torsion 
for representations in $\SU(2)$, or in $\SL(2;\C)$. 
That was studied motivated by the relations to the Casson invariant. 
Further he proposed a torsion polynomial of a $3$-manifold. 
In this paper, we define the torsion polynomial as follows. 

Let $M$ be a homology 3-sphere. 
We denote 
the set of conjugacy classes of representations from $\pi_1(M)$ in $\SL(2;\C)$ 
by $\mathcal{R}(M) $ 
and the subset of conjugacy classes with nontrivial value 
of Reidemeister torsion 
by $\mathcal{R}'(M)$. 

Now assume that $\mathcal{R}'(M) $ is a finite set.  

\begin{remark}
In general $\mathcal{R}(M)\ne \mathcal{R}'(M)$. For examples, please see \cite{Johnson, Kitano94-1}. 
\end{remark}

\begin{definition}
A one variable polynomial 
\[
{\sigma}_{M}(t)
=\prod_{[\rho]\in\mathcal{R}'(M)}(t-\tau_\rho(M))
\] is called the torsion polynomial 
of $M_n$. 
\end{definition}

\begin{remark}
If $\mathcal{R}'(M)=\emptyset$, 
then we define 
$\sigma_M(t)=1$.
\end{remark}

In \cite{Kitano15-2, Kitano16-1}
we gave explicit formulas of $\sigma_M(t)$ 
in the case of Brieskorn homology 3-spheres obtained by surgeries along torus knots. 

\begin{remark}
The torsion polynomial was defined as 
\[
\pm
\prod_{[\rho]\in\mathcal{R}'(M)}(t-1/\tau_\rho(M))
\]
under another normalization in \cite{Kitano15-2,Kitano16-1}.
\end{remark}

In this note, 
we show numerical computations for Dehn-surgeries along the figure-eight knot 
by using Mathematica. 

\section{Setting}

First we explain geometric setting, 
which is the same one in \cite{Kitano94-2, Kitano15-1}. 
Please see them for details. 

Let $K\subset S^3$ be the figure-eight knot and $E(K)$ the exterior. 
It is well-known that 
$\pi_1 E(K)$ has the following presentation;
\[
\pi_1 E(K)=\langle
x,y\ |\ wx=yw
\rangle
\]
where 
$w=xy^{-1}x^{-1}y$.

One can take $x$ as a meridian element in $\pi_1 E(K)$. 
As a longitude, one can do 
\[
l=w^{-1}\widetilde{w}
\]
where 
$\widetilde{w}=x^{-1} y x y^{-1}$. 

Let $M_n$ be the homology 3-sphere obtained by $1/n$-Dehn surgery along $K$. 
The fundamental group $\pi_1 M_n$ has the presentation as 
\[
\pi_1 M_n=\langle
x,y\ |\ wx=yw,\ xl^n=1
\rangle.
\]

Let $\rho:\pi_1 M_n\rightarrow \SL(2;\C)$ be an irreducible representation. 
Simply we write $X$ for $\rho(x)$, $Y$ for $\rho(y)$ and so on. 
It is well known that we may assume that $X$ and $Y$ have the following forms;
\[
X=\begin{pmatrix}
 s & 1 \\
 0 & 1/s \\
\end{pmatrix},
Y=\begin{pmatrix}
 s & 0 \\
 -t & 1/s \\
\end{pmatrix}
\]
where 
$s\in\C\setminus\{0\},t\in\C$, after taking conjugations. 

Now define the matrix $R$ by 
$R=W X-Y W$ 
where 
$W=X Y^{-1} X^{-1} Y$. 
The equation $R=\begin{pmatrix}0 & 0\\ 0& 0\end{pmatrix}$ 
induces a system of defining equations of the space of conjugacy classes 
of $\SL(2;\C)$-representations of $\pi_1 E(K)$. 
By direct computations, we have only one equation
\[
f(s,t)=3-\frac{1}{s^2}-s^2+3 t-\frac{t}{s^2}-s^2 t+t^2=0
\]
from $R=\begin{pmatrix} 0 & 0\\0 & 0\end{pmatrix}$. 

Solve $f(s,t)=0$ in $t$, 
\[
t=\frac{1-3 s^2+s^4 \pm \sqrt{1-2 s^2-s^4-2 s^6+s^8}}{2 s^2}.
\]

Hence the parameter $t$ can be eliminated 
by  substituting 
\[
t_\pm=\frac{1-3 s^2+s^4 \pm \sqrt{1-2 s^2-s^4-2 s^6+s^8}}{2 s^2}.
\]

Under $f(s,t)=0$, 
there are four choices on a pair of $(X,Y)$ as follows.  
\begin{enumerate}
\item
$
X=
\begin{pmatrix} s & 1 \\ 0 & 1/s
\end{pmatrix},
Y=
\begin{pmatrix} s & 0 \\ -t_+ & 1/s
\end{pmatrix}$ 
\item
$
X=\begin{pmatrix} s & 1 \\ 0 & 1/s
\end{pmatrix},
Y=\begin{pmatrix} s & 0 \\ -t_- & 1/s
\end{pmatrix}$,
\item
$
X=\begin{pmatrix} 1/s & 1 \\ 0 & s
\end{pmatrix},
Y=\begin{pmatrix} 1/s & 0 \\ -t_+ & s
\end{pmatrix}$ ,
\item
$
X=\begin{pmatrix} 1/s & 1 \\ 0 & s
\end{pmatrix},
Y=\begin{pmatrix} 1/s & 0 \\ -t_- & s
\end{pmatrix}$.
\end{enumerate}

\begin{lemma}
Among the above 4 pairs of $(X,Y)$, 
$(1)$ and $(3)$ give the same conjugacy class and $(2)$ and $(4)$ also gives the same one. 
These two classes are not same. 
\end{lemma}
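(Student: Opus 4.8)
The plan is to prove the two conjugacy equivalences by exhibiting a single explicit intertwining matrix, and then to separate the two resulting classes by a trace invariant.

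First, for the equivalence of $(1)$ and $(3)$ I would look for $P\in\mathit{GL}(2;\C)$ with $P X_1 P^{-1}=X_3$ and $P Y_1 P^{-1}=Y_3$, where $(X_1,Y_1)$ and $(X_3,Y_3)$ denote the pairs in $(1)$ and $(3)$ sharing the common value $t=t_+$. Writing $P=\begin{pmatrix}a&b\\c&d\end{pmatrix}$ and expanding the two intertwining relations $PX_1=X_3P$ and $PY_1=Y_3P$ entrywise turns them into a small linear system in $a,b,c,d$. The $X$-relation forces $d=a$ and $c=a(s-s^{-1})$ (leaving $b$ free), while the $Y$-relation additionally forces $b=a(s-s^{-1})/t$; here $t\neq 0$, which holds for every irreducible representation, since $t=0$ makes $X$ and $Y$ share an eigenvector, i.e.\ gives a reducible representation. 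Normalising $a=1$ yields
\[
P=\begin{pmatrix}1 & (s-s^{-1})/t\\ s-s^{-1} & 1\end{pmatrix},
\]
and because this computation uses only the formal shape of the matrices in terms of the parameter $t$, the very same $P$ (now with $t=t_-$) simultaneously conjugates $(2)$ into $(4)$.

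The one point needing care is that $P$ be invertible. Here $\det P=1-(s-s^{-1})^2/t=\bigl(t-(s-s^{-1})^2\bigr)/t$, so I must rule out $t=(s-s^{-1})^2$ on the curve $f(s,t)=0$. The clean way is to substitute directly: setting $A=s^2+s^{-2}$ one has $f=(3-A)(1+t)+t^2$, and at $t=(s-s^{-1})^2=A-2$ this becomes $(3-A)(A-1)+(A-2)^2=1\neq 0$. Hence the locus $t=(s-s^{-1})^2$ never meets $\{f=0\}$, so $\det P\neq 0$ throughout. Scaling $P$ by a square root of $1/\det P$ (a central, hence conjugation-trivial, factor) puts it in $\SL(2;\C)$, completing both equivalences.

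Finally, to see that the two classes are genuinely distinct I would compare a trace. A direct computation gives $\mathrm{tr}\,\rho(xy^{-1})=\mathrm{tr}(XY^{-1})=2+t$, which equals $2+t_+$ for the class of $(1),(3)$ and $2+t_-$ for the class of $(2),(4)$. Since the trace of the image of a fixed group element is a conjugacy invariant, the two classes cannot coincide as long as $t_+\neq t_-$. The expected obstacle --- really the only subtlety --- is exactly this proviso: $t_+=t_-$ precisely on the branch locus $1-2s^2-s^4-2s^6+s^8=0$, where the pairs $(1)$ and $(2)$ already coincide and the assertion degenerates; away from it the invariant $2+t$ separates the two classes.
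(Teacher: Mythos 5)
Your proof is correct, and it takes a more explicit route than the paper, which disposes of the whole lemma in one line: it computes the conjugacy invariant $\mathrm{tr}(XY^{-1})$ (stated there as $2-t$; for the matrices as printed the value is in fact $2+t$, as you found---a harmless sign slip, since only the dependence on $t$ matters) and then appeals to ``elementary arguments in linear algebras,'' i.e.\ implicitly to the standard fact that an irreducible representation of a two-generator group into $\SL(2;\C)$ is determined up to conjugacy by the traces of $X$, $Y$ and $XY$, which one checks coincide for the pairs $(1)$ and $(3)$ (resp.\ $(2)$ and $(4)$), both giving $\mathrm{tr}(XY)=s^2+s^{-2}-t$. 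You replace that character-theoretic step by an explicit intertwiner $P$, which obliges you to verify invertibility; your rewriting $f=(3-A)(1+t)+t^2$ with $A=s^2+s^{-2}$, which evaluates to $1$ on the locus $t=(s-s^{-1})^2$, settles $\det P\neq 0$ cleanly, and your exclusion of $t=0$ via irreducibility (common eigenvector when $t=0$) is genuinely needed and correctly justified. For the distinctness of the two classes both arguments use the same invariant, and you rightly flag that it separates them only when $t_+\neq t_-$, i.e.\ away from the branch locus $1-2s^2-s^4-2s^6+s^8=0$, where pairs $(1)$ and $(2)$ coincide and the statement degenerates---a caveat the paper leaves implicit. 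The trade-off: the paper's argument is shorter but rests on an unstated general lemma about trace coordinates of irreducible $\SL(2;\C)$-representations; yours is self-contained and constructive, at the modest cost of the determinant check.
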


\begin{proof}
Let us consider the trace of $XY^{-1}$:
$\mathrm{tr}(XY^{-1})=2-t$. 
Then by elementary arguments in linear algebras, 
one can see the above. 
\end{proof}

Solve the inequality 
\[
1-2 s^2-s^4-2 s^6+s^8>0
\]
under the condition $s\neq 0$ 
in the real numbers, 
one has 
\[
\begin{split}
& s\leq -\sqrt{\frac{1}{2} \left(3+\sqrt{5}\right)},
-\sqrt{\frac{1}{2} \left(3-\sqrt{5}\right)}\leq s<0,\\
&0<s\leq \sqrt{\frac{1}{2} \left(3-\sqrt{5}\right)},
\sqrt{\frac{1}{2} \left(3+\sqrt{5}\right)}\leq s
\end{split}
\]
and 
numerically 
\[
s\leq -1.61803, -0.618034\leq s<0,0<s\leq 0.618034, 1.61803\leq s.
\]
If $s$ belongs to the above intervals, 
then the corresponding $(s,t)$ gives an irreducible representation of $\pi_1 E(K)$ in $\SL(2;\R)$. 

Here the matrix corresponding to a longitude is given by 
\[
L=W^{-1}\widetilde{W}=X^{-1} Y X Y^{-1}X^{-1} Y X Y^{-1}.
\]

By direct computations, each entry $L_{ij}(1\leq i,j\leq 2)$ of $L$ is given as follows:
\[
\begin{split}
L_{11}
&=
-1+\frac{1}{2 s^4}-\frac{1}{2 s^2}-\frac{s^2}{2}+\frac{s^4}{2}\pm\frac{1}{2} \sqrt{1-2 s^2-s^4-2 s^6+s^8}\\
& +\frac{\sqrt{1-2 s^2-s^4-2 s^6+s^8}}{2s^4},
\\
L_{12}
&=\pm\frac{\sqrt{1-2 s^2-s^4-2 s^6+s^8}}{s^3}\pm\frac{\sqrt{1-2 s^2-s^4-2 s^6+s^8}}{s},
\\
L_{21}
&=0,
\\
L_{22}
&=-1+\frac{1}{2 s^4}-\frac{1}{2 s^2}-\frac{s^2}{2}+\frac{s^4}{2}\mp\frac{1}{2} \sqrt{1-2 s^2-s^4-2s^6+s^8}\\
& -\frac{\sqrt{1-2 s^2-s^4-2 s^6+s^8}}{2s^4}.
\end{split}
\]
Here the double-sign corresponds in the same order, which is depended on the choice of $t$. 

\begin{remark}
Remark that $\mathrm{tr}(L)$ is not depended on the choice of $t$. 
\end{remark}

We consider a $1/n$-Dehn-surgery along the figure-eight knot. 
Because we do not consider the 3-sphere, then we assume that $n\neq 0$. 
Note that $X$ is corresponding to a meridian. 
Then we compute the relation as 
\[
D=X-L^{-n}=(D_{ij}).
\]

\begin{remark}
It holds that $D_{21}=0$ identically, 
because $X$ and $L$ are upper triangular matrices in $\SL(2;\C)$. 
\end{remark}

Any solution of the systems of equations 
$D_{11}=D_{12}=D_{22}=0$ 
gives a conjugacy class of $\SL(2;\C)$-representation. 
Because $X$ and $L$ are upper triangular matrices, 
then it holds that $D_{11}=0$ for some $s\in\C$ if and only if $D_{22}=0$ for the same $s\in\C$. 
Hence we consider system of two equations $D_{11}=D_{12}=0$. 

\begin{remark}
For any solution $s_0\in\C\setminus\{0\}$, 
then the complex conjugate of $s_0$ is also a solution. 
Because the complex conjugate $\bar{\rho}$ of $\rho$ is alway a representation for any $\rho$. 
If $\rho$ given by $s_0$ is an conjugate to a representation in $\SU(2)$, 
then $\rho$ and $\bar{\rho}$ are conjugate each other. 
Because $\bar{s_0}=1/s_0$. 
\end{remark}

We consider Reidemeister torsion $\tau_\rho(M_n)$ 
for $M_n$ with 
$\rho:\pi_1(M_n)\rightarrow \SL(2;\C)$. 
For the precise definition of Reidemeister torsion $\tau_\rho(M)$ 
for an $\SL(2;\C)$-representation $\rho$, 
please see \cite{Kitano94-1, Kitano94-2, Porti15-1}. 

In the case of $1/n$-surgeries along the figure-eight knot, 
we obtain the following formula 
of Reidemeister torsion in terms of the trace of the meridian image. 

\begin{proposition}[Kitano\cite{Kitano15-2}]
Assume $n\neq 0$. 
If $\rho:\pi_1(M_n)\rightarrow \SL(2;\C)$ is an acyclic representation, 
then one has
\[
\tau_\rho(M_n)
=\frac{2(u-1)}{u^2(u^2-5)}\in\C\setminus\{0\}
\]
where $u=\mathrm{tr}(X)=s+1/s$. 
\end{proposition}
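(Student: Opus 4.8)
The plan is to compute $\tau_\rho(M_n)$ from the two–generator two–relator presentation $\pi_1(M_n)=\langle x,y\mid R_1,R_2\rangle$, with $R_1$ the figure-eight relator $wxw^{-1}y^{-1}$ and $R_2=xl^n$ the surgery relator, and then to collapse the answer to a rational function of $u=\mathrm{tr}(X)=s+1/s$. Since $M_n$ is a closed oriented $3$-manifold with this deficiency-zero presentation, the twisted complex $C_*(M_n;\C^2_\rho)$ has the shape $0\to C_3\to C_2\to C_1\to C_0\to 0$ of $\C$-dimensions $2,4,4,2$, with $\partial_1=(\rho(x)-I,\ \rho(y)-I)$, with $\partial_2$ the $2\times 2$ block matrix of Fox derivatives $\rho(\partial R_i/\partial x_j)$, and with $\partial_3$ the Poincar\'e dual of $\partial_1$. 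Under the acyclicity hypothesis the torsion is the alternating product of determinants of a compatible family of square blocks; in practice this reduces to a single Fox–Jacobian determinant divided by $\det(\rho(x)-I)$ together with its Poincar\'e-dual partner, so the first task is simply to write these blocks down explicitly from the matrices $X,Y,W,L$ of the Setting.

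Rather than fight the $4$-term complex head on, I would organise the computation through the surgery decomposition $M_n=E(K)\cup_{T^2}(S^1\times D^2)$ and the multiplicativity of torsion along the Mayer–Vietoris sequence. The solid torus contributes its core curve, which for $1/n$-filling along $xl^n$ can be taken to be the longitude $\lambda=l$, so its torsion factor is governed by $\det(\rho(l)-I)=\det(L-I)$. This already pins down the denominator: using that $L\in\SL(2;\C)$ is upper triangular one finds $\mathrm{tr}(L)=s^4+s^{-4}-s^2-s^{-2}-2$, and rewriting this through $s^2+s^{-2}=u^2-2$ gives $\mathrm{tr}(L)=u^4-5u^2+2=u^2(u^2-5)+2$, whence
\[
\det(L-I)=2-\mathrm{tr}(L)=-\,u^2(u^2-5).
\]
The remaining, $K$-dependent factor is the torsion of the figure-eight exterior with respect to the meridian–longitude framing of its homology, which by a direct Fox-calculus evaluation at the homology-sphere specialisation (abelianization variable set to $1$) I expect to reduce to $\pm 2(u-1)$; assembling numerator and denominator then yields $2(u-1)/\bigl(u^2(u^2-5)\bigr)$ after fixing the normalization.

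The crucial point making the formula independent of $n$ is that the surgery relator enters only through $R_2=xl^n$, so the Fox derivative $\rho(\partial(l^n)/\partial x_j)=(I+L+\cdots+L^{n-1})\,\rho(\partial l/\partial x_j)=(L^n-I)(L-I)^{-1}\rho(\partial l/\partial x_j)$ carries all the $n$-dependence. But on the representation variety one has $\rho(R_2)=XL^n=I$, i.e.\ $L^n=X^{-1}$, so $L^n-I=X^{-1}-I$ is manifestly $n$-free; substituting this identity wherever $L^n$ occurs erases every trace of $n$ and leaves a rational function of $s$ alone. Reducing that function to the single variable $u$ is then a matter of the trace identities of the Setting, in particular $\mathrm{tr}(XY^{-1})=2-t$ together with the defining equation $f(s,t)=0$, which may be rewritten as $(5-u^2)(1+t)+t^2=0$ and used to eliminate $t$.

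I expect the main obstacle to be the honest bookkeeping rather than any single hard idea: matching the bases across the Mayer–Vietoris sequence so that the non-acyclic exterior and torus pieces combine correctly, and keeping the overall sign and normalization consistent with the convention fixed in the Definition. The nonvanishing assertion $\tau_\rho(M_n)\in\C\setminus\{0\}$ is not a separate argument, since a torsion is nonzero exactly when the complex is acyclic; concretely the acyclicity hypothesis excludes the degenerate values $u=0$, $u=1$, $u^2=5$, at which either factor of the formula would collapse.
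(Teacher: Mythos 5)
The paper itself offers no proof here: the Proposition is quoted from \cite{Kitano15-2}, so your attempt can only be measured against the standard derivation that reference carries out, and your skeleton is indeed that standard route (surgery multiplicativity of torsion plus Fox calculus on the presentation $\langle x,y\mid wxw^{-1}y^{-1},\,xl^n\rangle$). Several of your verifiable steps check out against the data in the paper's Section 2: from the listed entries of $L$ one gets $\mathrm{tr}(L)=s^4+s^{-4}-s^2-s^{-2}-2=u^4-5u^2+2$, hence $\det(L-I)=2-\mathrm{tr}(L)=-u^2(u^2-5)$, which is the correct denominator; the rewriting $f(s,t)=(5-u^2)(1+t)+t^2$ is correct; the core of the filling solid torus for slope $\mu\lambda^n$ is indeed isotopic to $\lambda$; and the Fox-calculus identity $\rho(\partial l^n/\partial x_j)=(I+L+\cdots+L^{n-1})\rho(\partial l/\partial x_j)$ combined with $L^n=X^{-1}$ (forced by the surgery relator) is exactly the mechanism that makes the final formula independent of $n$.

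The genuine gap is the numerator. The sentence ``which by a direct Fox-calculus evaluation \dots\ I expect to reduce to $\pm 2(u-1)$'' is precisely the figure-eight-specific content of the Proposition, and you assert it rather than derive it: nothing in your proposal carries out the Fox calculus on $wxw^{-1}y^{-1}$ with the paper's matrices $X,Y$ and eliminates $t$ via $f(s,t)=0$, which is the bulk of the actual computation in \cite{Kitano15-2} (resting on the exterior computation of \cite{Kitano94-2}). Compounding this, the sign cannot be ``fixed by normalization'' at the end: for an $\SL(2;\C)$-representation the torsion is sign-determined, so distinguishing $2(u-1)$ from $-2(u-1)$ requires exactly the basis bookkeeping in the Mayer--Vietoris argument that you defer. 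Two smaller points: multiplicativity in the clean form $\tau(M_n)=\tau(E(K))\cdot\tau(V)$ with $\tau(T^2)=1$ first requires showing that acyclicity of $M_n$ (together with $\det(L-I)\neq 0$ and absence of invariant vectors on the torus) forces acyclicity of all three pieces, which you gesture at but do not argue; and your closing claim that acyclicity ``excludes the degenerate values $u=0,1,\pm\sqrt{5}$ at which either factor would collapse'' is circular as stated, since it presupposes the formula being proved. In short: right strategy, correct and verified denominator and $n$-independence mechanism, but the heart of the identity --- the factor $2(u-1)$ with its sign --- is missing.
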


Here $\rho$ is called to be an acyclic representation 
if the chain complex of $M$ with $\C^2_\rho$-coefficients is an acyclic chain complex. 
By numerical computations we compute $\tau_\rho(M_n)$ and $\sigma_{M_n}(t)$ 
by using this formula.  

Here we mention the Casson invariant and the $\SL(;\C)$-Casson invariant. 
Please see \cite{Akbult-McCarthy} and \cite{Curtis, Boden-Curtis06, Boden-Curtis12} 
for precise definitions and properties. 

In 1980's 
Casson defined the Casson invariant $\lambda(M)\in\Z$ 
for a homology 3-sphere $M$ 
as the half of algebraic count of conjugacy classes of irreducible $\SU(2)$-representations. 
In 2000's 
Curtis~\cite{Curtis} defined 
the $\SL(2;\C)$-Casson invariant $\lambda_{\SL(2;\C)}(M)\in\Z$ 
for M  
by counting conjugacy classes of irreducible $\SL(2;\C)$-representations. 
In the case of $1/n$-surgeries along the figure-eight knot, 
one has the following by applying general formula by Casson, and the one by Boden and Curtis. 

\begin{proposition}
\noindent
\begin{itemize}
\item
$\lambda(M_n)=-n.$
\item
$\lambda_{\SL(2;\C)}(M_n)=4n-1.$
\end{itemize}
\end{proposition}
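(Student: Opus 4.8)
The plan is to derive each equality from a known Dehn-surgery formula applied to the figure-eight knot $K$, rather than from the explicit representation-theoretic data of Section 2. For the first identity I would invoke Casson's surgery formula (as presented in Akbulut--McCarthy \cite{Akbult-McCarthy}): for $1/n$-surgery along a knot $K\subset S^3$,
\[
\lambda\bigl(S^3_{1/n}(K)\bigr)=\frac{n}{2}\,\Delta_K''(1),
\]
where $\Delta_K(t)$ is the Alexander polynomial normalized symmetrically, $\Delta_K(t)=\Delta_K(t^{-1})$ with $\Delta_K(1)=1$. The rest is a direct computation: the figure-eight knot has $\Delta_K(t)=-t+3-t^{-1}$, hence $\Delta_K''(t)=-2t^{-3}$ and $\Delta_K''(1)=-2$, so $\lambda(M_n)=\tfrac{n}{2}\cdot(-2)=-n$. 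The only point to check is that the surgery and orientation conventions used here ($1/n$-surgery, orientation inherited from $S^3$) agree with those in the cited formula, which fixes the overall sign.

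For the second identity I would apply the Dehn-surgery formula for the $\SL(2;\C)$-Casson invariant due to Boden and Curtis \cite{Boden-Curtis06, Boden-Curtis12}, together with Curtis's original construction \cite{Curtis}. Since the figure-eight knot is a small two-bridge knot, every nonabelian component of the $\SL(2;\C)$-character variety of the exterior $E(K)$ is a curve, and in fact there is a single such component. The Boden--Curtis formula then computes $\lambda_{\SL(2;\C)}(M_n)$ as a suitably weighted intersection number of this character curve with the curve cut out by the surgery relation $\rho(xl^n)=1$ inside the character variety of $\partial E(K)$. Concretely, this intersection count equals the number, with multiplicity, of solutions $s\in\C\setminus\{0\}$ of the system $D_{11}=D_{12}=0$ set up in Section 2, and I would argue that this number is $4n-1$.

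The main obstacle is the honest evaluation of that intersection number; unlike the Casson case, it does not reduce to a single derivative of the Alexander polynomial. One must control (i) the contribution of the ideal points of the character curve --- equivalently, the Newton polygon of the $A$-polynomial of the figure-eight knot --- which governs the linear growth rate in $n$, and (ii) the local multiplicities at each intersection point, including the behaviour near reducible characters and at any point where the surgery relation meets the character curve nontransversally. Showing that these contributions assemble to exactly $4n-1$, rather than to a naive degree bound, is the genuinely nontrivial step; this is where the explicit two-bridge computations of Boden--Curtis are essential, and where the numerical enumeration of $D_{11}=D_{12}=0$ performed in this note serves as an independent consistency check.
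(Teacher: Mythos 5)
Your proposal matches the paper's approach: the paper gives no independent argument for this proposition, deriving both equalities exactly as you do --- from Casson's surgery formula (which yields $\lambda(M_n)=\frac{n}{2}\Delta_K''(1)=-n$ since $\Delta_K''(1)=-2$ for the figure-eight knot under the normalization $\Delta_K(1)=1$) and from the Boden--Curtis surgery formula for the $\SL(2;\C)$-Casson invariant. The only remark worth adding is that the intersection-theoretic work you flag as ``the genuinely nontrivial step'' is already carried out in closed form for surgeries on twist knots in \cite{Boden-Curtis06}, so one may simply quote their formula and substitute the figure-eight data to get $4n-1$, which is precisely what the paper does.
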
 

\begin{remark}
For any positive $n$, 
the above proposition implies that the number of conjugacy classes of $\SU(2)$-representations is algebraically 
$2n$ 
and 
the one of conjugacy classes of $\SL(2;\C)$-representations is $4n-1$. 
\end{remark}

\section{Computation}

Here we show computations from $n=1,\dots,10$ by using Mathematica. 
We make a list of the values of $s$, $u=s+1/s$ and $\tau_{\rho}$. 

We remark the followings. 
\begin{itemize}
\item
We choice the value $s$ in $|s|\leq 1$. 
Because the inverse $1/s$ can be done if $|s|>1$. 
\item
For a representation which is conjugate to the one in $\SU(2)$, 
we choice only one value and omit its complex conjugate. 
\end{itemize}

\subsection{Summary}

We compare computations with the values of Casson invariant $\lambda(M)$ and the $\SL(2;\C)$-Casson invariant 
$\lambda_{\SL(2;\C)}(M)$. 
For any cases, we could find numerically $2|\lambda(M)|$ conjugacy classes in $\SU(2)$ 
and $|\lambda_{\SL(2;\C)}(M)|$ conjugacy classes in $\SL(2;\C)$. 
Further we could also do only one $\SL(2;\R)$-representation. 

We also compute torsion polynomials $\sigma_{M_n}(t)$. 
We simply write $\sigma_n(t)$ to $\sigma_{M_n}(t)$. 
From the definition, 
it is a polynomial over $\Q$, 
because $\tau_\rho(M)$ is an algebraic number for $[\rho]\in\mathcal{R}'(M)$. 
All previous examples in \cite{Kitano15-2,Kitano16-1} are polynomials over $\Z$. 

To compute torsion polynomials, 
values of imaginary parts which are sufficiently small 
as compared with their real parts are regarded as 0.
Because theoretically we can see a torsion polynomial is a polynomial over $\Q$. 

\begin{remark}
In the case of a torus knot, 
there is a 3-term relation among $\sigma_{n+1}(t),\sigma_{n}(t),\sigma_{n-1}(t)$. 
However we see that there is not such a relation in the figure-eight knot case, 
as $\sigma_{-1}(t)=\sigma_1(t),\sigma_0(t)=1$. 
\end{remark}
\subsection{The case of $n= 1$}

The first example $M_1$ is the Briskorn homology 3-sphere $\Sigma(2,3,7)$, 
which is not a hyperbolic manifold. 
Now one has 
\begin{itemize}
\item
$\lambda(M_1)=-1$, 
\item
$\lambda_{\SL(2;\C)}(M_1)=3$ .
\end{itemize}
We find 2 conjugacy classes of $\SU(2)$-representations 
and totally 3 conjugacy classes of $\SL(2;\C)$-representations. 
But the third one that does not come from $\SU(2)$-representations is an $\SL(2;\R)$-representation. 

\begin{table}[htb]
  \begin{tabular}{|c|l|l|l|} \hline
$\SU(2)$ & $s$ & $u=s+1/s$ & $\tau_\rho$  \\ \hline
$\circ$ & $-0.400969+0.916092 i$ & $-0.801938$  & $1.28621$ \\ \hline
$\circ$ &$ 0.277479\, +0.960732 i$ & $0.554958$ & $0.615957$\\ \hline
& $0.611406$ & $2.24698$ & $10.0978$\\ \hline
\end{tabular}
\end{table}

In this case the torsion polynomial is given as 
\[
\sigma_1(t)=t^3-12 t^2+20 t-8. 
\]
This computation coincides with the one in \cite{Kitano15-2}.

\subsection{The case of $n=2$}

The next $M_2$ is a hyperbolic homology 3-sphere. 
One has 
\begin{itemize}
\item
$\lambda(M_2)=-2$, 
\item
$\lambda_{\SL(2;\C)}(M_2)=7$ .
\end{itemize}
In this case we find 4 conjugacy classes of $\SU(2)$-representations 
and totally 7 conjugacy classes of $\SL(2;\C)$-representations. 
The last representation is an $\SL(2;\R)$-representation. 

\begin{table}[htb]
\begin{tabular}{|c|l|l|l|} \hline
$\SU(2)$ & $s$ & $u=s+1/s$ & $\tau_\rho$  \\ \hline
$\circ$ &  $-0.423608+  0.905845 i$ & $-0.847217$ & $1.20196$ \\ \hline
$\circ$  &  $\ -0.194046 +  0.980992 i$ & $-0.388092$ & $3.80096$  \\ \hline
$\circ$ &  $\ \ \ 0.156335\, +  0.987704 i$ & $\ 0.31267$ & $2.86834$  \\ \hline
$\circ$ & $\ 0.476693\, +  0.87907 i$ & $\ 0.953386$ & $0.0250713$  \\ \hline
        &  $-0.69314\pm 0.0194149 i$ & $-2.13472 \mp 0.0209638 i$ & $2.98853 \pm 0.563052 i$  \\ \hline
        &   $0.61642$ & 2.23869 & 42.1266 \\ \hline
\end{tabular}
\end{table}

The torsion polynomial is given by 
\[
\sigma_2(t)
=t^7-56 t^6 +660 t^5 -3384 t^4+ 8720 t^3 -11008 t^2 +5376 t -128  .  
\]

\subsection{The case of $n=3$}

Next one has 
\begin{itemize}
\item
$\lambda(M_3)=-3$, 
\item
$\lambda_{\SL(2;\C)}(M_3)=11$.
\end{itemize}

In this case we find 6 conjugacy classes of $\SU(2)$-representations 
and totally 11 conjugacy classes of $\SL(2;\C)$-representations. 
The last representation is an $\SL(2;\R)$-representation. 

\begin{table}[htb]
\begin{tabular}{|c|l|l|l|}\hline
  $\SU(2)$ & $s$ & $u=s+1/s$ & $\tau_\rho$  \\ \hline
  $\circ$ & $-0.489756+  0.87186 i$ & $-0.979511$ & $1.02124$   \\ \hline
  $\circ$ & $-0.31143+  0.950269 i$ & $-0.622859$ & $1.814$ \\ \hline
  $\circ$ & $-0.125581+  0.992083 i $ & $-0.251162$ & $8.03486$   \\ \hline
  $\circ$ & $0.108419\, + 0.994105 i$ & $0.216838$  & $6.72584$ \\ \hline
  $\circ$ & $0.352641\, +  0.935759 i$ & $0.705282$ & $0.263178$   \\ \hline
  $\circ$ & $0.462835\, +  0.886444 i$ & $0.92567$ & $0.0418747$ \\ \hline
          & $-0.649243\pm 0.009109 i$ & $-2.18919 - 0.0124968 i$ & $6.0064 + 1.53513 i$  \\ \hline
          & $0.762562\, \pm 0.0145425 i$ & $2.07345 - 0.010457 i$ & $-0.709789 + 0.0436681 i$  \\ \hline
          &   $0.61732$ & $2.23723$ & $95.5058$ \\ \hline
\end{tabular}
\end{table}

The torsion polynomial is given by 
\[
\begin{split}
\sigma_3(t)
=&t^{11} -124 t^{10}+3036 t^9 -31696 t^8 +161024 t^7 -364128 t^6\\
& +152640 t^5 
+426752 t^4 -262144 t^3 -142336 t^2 +55296 t-2048.
\end{split}
\]

\subsection{The case of $n=4$}

Next one has 
\begin{itemize}
\item
$\lambda(M_4)=-4$, 
\item
$\lambda_{\SL(2;\C)}(M_4)=15$.
\end{itemize}

In this case we find 8 conjugacy classes of $\SU(2)$-representations 
and totally 15 conjugacy classes of $\SL(2;\C)$-representations. 
The last representation is an $\SL(2;\R)$-representation. 

\begin{table}[htb]
\begin{tabular}{|c|l|l|l|}\hline
   $\SU(2)$ & $s$ & $u=s+1/s$ & $\tau_\rho$  \\ \hline
   $\circ$ & $-0.478316+0.878188 i$ & $-0.956632$  & $1.04682$  \\ \hline
   $\circ$ & $-0.413993+  0.91028 i$ & $-0.827986$ & $1.23604 $  \\ \hline
   $\circ$ &$-0.242737+0.970092 i$ & $-0.485475$ & $2.64583$ \\ \hline
$\circ$ &   $-0.0926466+  0.995699 i$ & $-0.185293$ & $13.9046$   \\ \hline
$\circ$ &$0.0829193\, +0.996556 i$& $0.165839$  & $12.1993$ \\ \hline
$\circ$ &    $0.268854\, +  0.963181 i$ & $0.537707$  & $0.67882$  \\ \hline
$\circ$ & $0.382257\, + 0.924056 i$ & $0.764514$  & $0.182491$ \\ \hline
$\circ$ &    $0.49426\, +  0.869314 i$ & $0.98852$  & $0.00584088$  \\ \hline
 &    $-0.808705\pm 0.0102842 i$ & $-2.04505 \mp 0.00543826 i$ & $1.77945 \pm 0.0421084 i$  \\ \hline
  &    $-0.635184\pm 0.00517794 i$ & $-2.20943 \mp 0.00765508 i$ & $10.2737 \pm 2.88241 i$  \\  \hline
 &    $0.693187\, \pm 0.00964411 i$ & $2.13552 \mp 0.0104227 i$ & $-1.12125 \pm 0.112867 i$  \\ \hline
  &    $0.617633$ & 2.23672 & 170.236 \\ \hline
  \end{tabular}
\end{table}

\[
\begin{split}
\sigma_4(t)
&=
t^{15}-224 t^{14}  + 10320 t^{13}- 211776 t^{12}+ 2.2964 \times 10^6 t^{11} - 1.35709 \times 10^7  t^{10}\\
&+ 4.11722 \times 10^7t^9- 4.96721 \times 10^7 t^8- 3.55295 \times 10^7 t^7  + 1.56351 \times 10^8 t^6 \\
&- 1.13653 \times 10^8 t^5 - 5.89578 \times 10^7 t^4  + 1.15933 \times 10^8 t^3   - 5.0004 \times 10^7 t^2\\
&+5.89824 \times 10^6 t -32768
\end{split}
\]

\subsection{The case of $n=5$}

Next one has 
\begin{itemize}
\item
$\lambda(M_5)=-5$, 
\item
$\lambda_{\SL(2;\C)}(M_5)=19$.
\end{itemize}

In this case we find 10 conjugacy classes of $\SU(2)$-representations 
and totally 19 conjugacy classes of $\SL(2;\C)$-representations. 
The last representation is an $\SL(2;\R)$-representation. 

\begin{table}[htb]
  \begin{tabular}{|c|l|l|l|} \hline
$\SU(2)$ &   $s$ & $u=s+1/s$ & $\tau_\rho$  \\ \hline
$\circ$   & $-0.496333+  0.868132 i$ & $-0.992666$ & $1.00743$  \\ \hline
$\circ$  & $ -0.420075+0.907489 i $ & $-0.840151$ & $1.21421$ \\ \hline
$\circ$   & $-0.343785+  0.939049 i$ & -0.687569 & $1.57697$  \\ \hline
$\circ$ & $-0.19818+0.980166 i$ & $-0.396359$ & $3.67066$ \\ \hline
$\circ$   & $-0.073359+  0.997306 i$ & $-0.146718$ & $21.4005$ \\ \hline
$\circ$  & $ 0.0671124\, +0.997745 i $ & $0.13422$ & $19.2915$ \\ \hline
$\circ$   & $0.215697\, +  0.97646 i$ & $0.431395$ & $1.26939$   \\ \hline
$\circ$  & $ 0.319062\, +0.947734 i $ & $0.638124$ & $0.386993$ \\ \hline 
$\circ$   & $0.44386\, +  0.896096 i$ & $0.88772$ & $0.0676542$   \\ \hline
$\circ$  & $ 0.485869\, +0.874031 i$ & $0.971739$ & $0.0147589$  \\ \hline
      & $-0.628893\pm 0.00332617 i$ & $-2.21894 \mp 0.00508349 i$ & $15.7706 \pm 4.61071 i$  \\ \hline
   & $-0.731102\pm 0.00855695 i$ & $-2.09871 \mp 0.00744981 i$ & $2.35697 \pm 0.11268 i$  \\ \hline
   & $0.840595\, \pm 0.00745097 i$ & $2.03014 \mp 0.00309303 i$ & $-0.568873 \pm 0.00810636 i$\\ \hline   
   & $0.664373\, \pm 0.00643176 i$ & $2.16941 \mp 0.00813843i$ & $-1.66792 \pm 0.199595 i$  \\ \hline
   & $0.617778$ & $2.23648$ & $266.318$  \\ \hline 
\end{tabular}
\end{table}

The torsion polynomial is given by 
\[
\begin{split}
\sigma_5(t)
&=
t^{19}- 348 t^{18}+ 24428 t^{17}- 756768 t^{16}+ 1.22252 \times 10^7 t^{15} - 1.05049 \times 10^8 t^{14}\\
&
+ 4.39482 \times 10^8 t^{13} - 6.05556 \times 10^8 t^{12}- 1.45911 \times 10^9 t^{11} + 5.73225 \times 10^9 t^{10}\\
&- 3.56966 \times 10^9 t^9 - 9.32096 \times 10^9 t^8
+ 1.48101 \times 10^{10} t^7 - 1.9304 \times 10^9 t^6 \\
&  - 7.91541 \times 10^9 t^5 +3.44198 \times 10^9 t^4+ 1.05592 \times 10^9 t^3  - 6.28883 \times 10^8 t^2\\
&+ 4.45645 \times 10^7 t 
  -524288 
\end{split}
\]

\subsection{The case of $n=6$}

One has 
\begin{itemize}
\item
$\lambda(M_6)=-6$, 
\item
$\lambda_{\SL(2;\C)}(M_6)=23$.
\end{itemize}

In this case we find 12 conjugacy classes of $\SU(2)$-representations 
and totally 23 conjugacy classes of $\SL(2;\C)$-representations. 
The last representation is an $\SL(2;\R)$-representation. 

\begin{table}[htb]
  \begin{tabular}{|c|l|l|l|}\hline
$\SU(2)$ &  $s$ & $u=s+1/s$ & $\tau_\rho$  \\ \hline
$\circ$  & $ -0.490087+0.871673 i$ & $-0.980174$ & $1.02053$\\ \hline
$\circ$  &  $-0.460559+  0.887629 i$ & $-0.921118$ & $1.0908$  \\ \hline
$\circ$ & $ -0.366341+0.93048 i $ & $-0.732683$ & 1.44635 \\ \hline
$\circ$   &  $-0.290911+  0.95675 i$ & -0.581822 & $2.00486$  \\ \hline
  $\circ$  &$ -0.167221+0.985919 i$ & $-0.334442$ & $4.8814$ \\ \hline
  $\circ$   &  $-0.0607063+  0.998156 i$ & $-0.121413$ & $30.5197$   \\    \hline
  $\circ$  & $ 0.0563605\, +0.99841 i$ & $0.112721$ & $28.0037$ \\ \hline
$\circ$  &  $0.179677\, +  0.983726 i$ & $0.359355$  & $2.03702$ \\ \hline
$\circ$  & $0.272216\, +0.962236 i$ & $0.544432$ & $0.653532$ \\ \hline
$\circ$   &  $0.387688\, +  0.921791 i$ & $0.775376$  & $0.169874$  \\ \hline
$\circ$  & $ 0.442457\, +0.89679 i$ & $0.884914$ & $0.0697032$ \\ \hline
$\circ$   &  $0.497456\, +  0.867489 i$ & $0.994912$ & $0.00256369$   \\   \hline
&  $-0.625531\pm 0.00231384 i$ & $-2.22415 \mp 0.00359944 i$ & $22.4926 \pm 6.72158 i$  \\ \hline
&  $-0.693197\, \pm 0.00642162 i$ & $-2.13566 \mp 0.00694108 i$ & $3.11879 \pm 0.197207 i$  \\ \hline  
&  $-0.863685\pm 0.00558142 i$ & $-2.02147 \mp 0.00190054 i$ & $1.61843 \pm 0.0115852 i$  \\ \hline
&  $0.762163\, \pm 0.00726544 i$ & $2.0741 \mp 0.00524079 i$ & $-0.714588 \pm 0.0221273 i$  \\ \hline
&  $0.64957\, \pm 0.00453793 i$ & $2.18897 \mp 0.00621643 i$ & $-2.34127 \pm 0.304635 i$  \\ \hline
&  $0.617856$ & 2.23636 & 383.752  \\ \hline
  \end{tabular}
\end{table}

The torsion polynomial is given by 
\[
\begin{split}
\sigma_6(t)
&=
t^{23}- 504 t^{22}+ 52020 t^{21} - 2.40364 \times 10^6 t^{20} + 5.93684 \times 10^7 t^{19}\\
&- 8.20377 \times 10^8 t^{18}+6.23643 \times 10^9 t^{17}- 2.42844 \times 10^{10} t^{16} + 2.55758 \times 10^{10} t^{15}\\
& + 1.64156 \times 10^{11} t^{14} - 6.99939 \times 10^{11} t^{13}+ 8.04666 \times 10^{11} t^{12}+ 1.36418 \times 10^{12} t^{11}\\
&- 5.35777 \times 10^{12} t^{10}+ 6.06942 \times 10^{12} t^9 - 6.38688 \times 10^{11} t^8 - 6.38688 \times 10^{11} t^8\\
&- 4.81682 \times 10^{12} t^7  + 4.04757 \times 10^{12} t^6 - 2.98072 \times 10^{11} t^5 - 1.0991 \times 10^{12} t^4\\
&+ 5.29833 \times 10^{11}t^3 - 7.9675 \times 10^{10} t^2 + 3.47288 \times 10^9 t
-8.38861 \times 10^6  .
 \end{split}
\]

\subsection{The case of $n=7$}

First one has $\lambda(M_7)=-7$, $\lambda_{\SL(2;\C)}(M_7)=27$. 

In this case we find 14 conjugacy classes of $\SU(2)$-representations like previous examples. 
We do more 13 conjugacy classes of $\SL(2;\C)$-representations. 
Among them we do only one $\SL(2;\R)$-representation. 

\begin{table}[htb]
  \begin{tabular}{|c|l|l|l|} \hline
 $\SU(2)$ &  $s$ & $u=s+1/s$ & $\tau_\rho$  \\ \hline
$\circ$ &    $-0.498132 +  0.867101 i$ & $-0.996264$ & $1.00376$   \\     \hline
$\circ$ &    $-0.456717+  0.889612 i$ & $-0.913434$ & $1.10105$  \\ \hline
$\circ$ &    $-0.415538 +  0.909576 i$ & $-0.831076$ & 1.2304   \\   \hline
$\circ$ &    $-0.322425+0.946595 i$ & $-0.644849$ & $1.725754883584166$ \\ \hline 
$\circ$ &    $-0.251181 +  0.96794 i$ & $-0.502362$ & $2.50781$  \\       \hline
$\circ$ &    $-0.144537+  0.989499 i$ & $-0.289074$ & $6.27537$  \\ \hline
$\circ$ &    $-0.0517713 +  0.998659 i$ & $-0.103543$ & $41.2613$  \\  \hline
$\circ$ &    $0.0485751\, + 0.99882 i$ & $0.0971502$ & $38.3362$  \\ \hline
$\circ$ &    $0.153815\,  +  0.9881 i$ & $0.30763$  & $2.98291$   \\    \hline
$\circ$ &    $0.236804\, +  0.971558 i$ & $0.473608$  & $0.982802$  \\ \hline
$\circ$ &    $0.340083\,  +  0.940396 i$ & $0.680166$  & $0.304734$  \\   \hline 
$\circ$ &    $0.397537\, +  0.917586 i$ & $0.795074$  & $0.148438$  \\  \hline
$\circ$ &    $0.470809\,  +  0.882235 i$ & $0.941618$ & $0.0320157$   \\ \hline
$\circ$ &    $0.492671\, +  0.870215 i$ & $0.985343$ & $0.00749368$ \\ \hline
&    $-0.623523\pm 0.001701i$ & $-2.22730105 \mp 0.002675 i$ & $30.4388 \pm 9.21537 i$  \\ \hline
&    $-0.671806\pm 0.00487214 i$ & $-2.16025 \mp 0.00592043 i$ & $4.04034 \pm 0.29538 i$ \\ \hline
&    $-0.787517 \pm 0.00610802 i$ & $-2.05725 \mp 0.00374012 i$ & $1.88117 \pm  0.0331674 i$  \\ \hline
&    $0.881081\,  \pm 0.00431162 i$ & $2.01602 \mp 0.00124229 i$ & $-0.534338 \pm 0.00285536 i$  \\ \hline 
&    $0.719217\,  \pm 0.00597955 i$ & $2.10952 \mp 0.00557931 i$ & $-0.905083 \pm 0.0385034 i$  \\ \hline
&    $1.560218\pm 0.008182 i$ & $2.201129\pm 0.0048120i$ & $-3.13933 \mp 0.427727 I$ \\ \hline
 &    $0.617903$ & $2.23628$ & $521.407$ \\ \hline
\end{tabular}
\end{table}

Now we can see that 
\[
\begin{split}
\sigma_7(t)
&=t^{27} -684 t^{26}+94916 t^{25}-5.8661\times 10^6 t^{24}+1.92341\times 10^8 t^{23}-3.348194\times 10^9 t^{22}\\
&+3.38294\times 10^{10} t^{21} -1.60441\times 10^{11} t^{20} +1.21022\times 10^{11} t^{19}+ 2.40426\times 10^{12} t^{18}\\
&-1.09427 \times 10^{13} t^{17} 
+ 1.08363 \times 10^{13} t^{16}  +5.40471\times 10^{13}  t^{15}-1.97613\times 10^{14} \times 10^7 t^{14} \\
&+2.18273 \times 10^{14} t^{13}
+1.14878 \times 10^{14} t^{12}  -4.92745 \times 10^{14} t^{11}+3.10448 \times 10^{14} t^{10}\\
&+2.31893 \times 10^{14} t^9
-3.52574\times 10^{14} t^8+4.61951\times 10^{13} t^7 +1.12401 \times 10^{14} t^6\\
& -4.44561 \times 10^{13} t^5-8.4299 \times 10^{12} t^4+6.16006\times 10^{12} t^3\\
& -7.89939\times 10^{11} t^2 +2.34881\times 10^{10} t ^1 -1.34218\times 10^8.
\end{split}
\]
\subsection{The case of $n=8$}

In the next case, one has 
\begin{itemize}
\item
$\lambda(M_8)=-8$, 
\item
$\lambda_{\SL(2;\C)}(M_8)=31$.
\end{itemize}

In this case we find 16 conjugacy classes of $\SU(2)$-representations 
and totally 31 conjugacy classes of $\SL(2;\C)$-representations. 
The last representation is an $\SL(2;\R)$-representation. 

\begin{table}[htb]
  \begin{tabular}{|c|l|l|l|}\hline
 $\SU(2)$ &  $s$ & $u=s+1/s$ & $\tau_\rho$  \\ \hline
 $\circ$ & $-0.494366+0.869254 i$ & $-0.988732$ & $1.011492842$\\ \hline
 $\circ$ &$ -0.47754+  0.87861 i$ & $-0.95508$ & $1.048631249$ \\ \hline
 $\circ$ & $-0.419132+0.907925 i$ & $-0.838263$ &  $1.217529795$\\ \hline
 $\circ$ &  $ -0.37393+  0.927457 i$ & $-0.747861$ & $1.407484041$\\ \hline
 $\circ$ & $ -0.28699+0.957934 i$ & $-0.573979$ & $2.045827739$\\ \hline
$\circ$ &  $-0.220616+  0.975361 i$ & $-0.441231$ & $3.081126266$ \\ \hline
$\circ$ & $ -0.127229+0.991873 i$ &$-0.254459$ & $7.85131636$\\ \hline
$\circ$ &  $-0.0451269+  0.998981 i $ & $-0.0902538$ & $53.6247182$\\ \hline
$\circ$ & $ 0.042678\, +0.999089 i$ & $0.0853561$ & $50.2893844$\\ \hline
$\circ$ &$0.134395\, +  0.990928 i$ & $0.268789$ & $4.107704808$ \\ \hline
$\circ$ & $0.209299\, +0.977852 i$ & $0.418599$ & $1.375414532$\\ \hline
$\circ$ &$0.301435\, +  0.953487 i$ & $0.60287$ & $0.471324314$ \\ \hline
$\circ$ & $0.357963\, +0.933736 i$ & $0.715926$ & $0.247016780$\\ \hline
$\circ$ &$0.434268\, +  0.900784 i $ & $0.868535$ & $0.0820955924$ \\ \hline
$\circ$ & $0.466323\, +0.884614 i$ & $0.932646$ & $0.0374963906$\\ \hline
$\circ$ &$0.49857\, +  0.866849 i $ & $0.997141$ & $0.0014358346$ \\ \hline
& $ -0.622227\pm 0.00130353 i$ & $-2.22935\mp 0.0020633 i$ & $39.60771 \pm 12.09283 i$\\ \hline
& $ -0.658499\pm 0.0037911 i$ & $-2.17705\mp 0.00495151 i$ & $5.1134622 \pm 0.4078733 i$\\ \hline  
& $-0.742103\pm 0.00541306 i$ & $-2.08955\mp 0.00441555 i$ & $2.2312104 \pm 0.05871724 i$\\ \hline
& $-0.894615\pm 0.00341956 i$ & $-2.0124\mp 0.000853032 i$& $1.56555866 \pm 0.00477245 i$\\ \hline
& $0.808382\, \pm 0.0051452 i $ &$2.04537\, \mp 0.002728 i$ & $-0.61198103 \pm 0.008329177 i$\\ \hline 
& $0.693201\, \pm 0.00481418 i$ & $2.13571\, \mp 0.00520388 i$& $-1.1321183 \pm 0.05702512  i$ \\ \hline  
& $0.635422\, \pm 0.00258348 i$ & $2.20915\, \mp 0.00381495 i$ & $-4.060877 \pm 0.5708212  i$\\ \hline
& $0.617934$ & $2.23623$ & $682.674$\\ \hline
\end{tabular}
\end{table}

The torsion polynomial is given by the following. 

\[
\begin{split}
\sigma_8(t)
&=
t^{31}-896 t^{30}+164160 t^{29}-1.34889\times
10^7 t^{28}+5.9497\times 10^8 t^{27}-1.48208\times
10^{10} t^{26}\\
&+2.0792\times 10^{11} t^{25}-1.5975\times
10^{12} t^{24}+5.113\times 10^{12} t^{23}+1.609\times
10^{13} t^{22}\\
&-2.199\times 10^{14} t^{21}+7.892\times
10^{14} t^{20}-2.13\times 10^{14} t^{19}-8.29\times
10^{15} t^{18}\\
&+3.230\times 10^{16} t^{17} -5.36\times
10^{16} t^{16}+1.31\times 10^{16} t^{15}+1.119\times
10^{17} t^{14}\\
&-1.979\times 10^{17} t^{13}+8.35\times
10^{16} t^{12}+1.488\times 10^{17} t^{11}-2.212\times
10^{17} t^{10}\\
&+7.22\times 10^{16} t^9+7.11\times
10^{16} t^8-7.047\times 10^{16} t^7+1.329\times
10^{16} t^6+9.403\times 10^{15} t^5\\
&-5.4486\times
10^{15} t^4+1.03031\times 10^{15} t^3-7.2572\times
10^{13} t^2\\
&+1.59773\times 10^{12} t-2.14748\times
10^9
\end{split}
\]
%
\subsection{The case of $n=9$}

First one has 
\begin{itemize}
\item
$\lambda(M_9)=-9$, 
\item
$\lambda_{\SL(2;\C)}(M_9)=35$. 
\end{itemize}

Here we find 18 conjugacy classes of $\SU(2)$-representations 
and totally 35 conjugacy classes of $\SL(2;\C)$-representations.  
The last one is also a $\SL(2;\R)$-representation. 

\begin{table}[htb]
  \begin{tabular}{|c|l|l|l|} \hline
 $\SU(2)$ &  $s$ & $u=s+1/s$ & $\tau_\rho$  \\ \hline
$\circ$ &$-0.498871+  0.866676 i $ & $-0.9977413535$ & $1.002267596$\\ \hline
$\circ$ &$ -0.473084+0.881017 i$ & $-0.946168$ & $1.059217610 $ \\ \hline
$\circ$ &$-0.447444+  0.894312 i$ & $-0.894888$ & $1.126969070$ \\ \hline
$\circ$ &$ -0.38399+0.923337 i$ & $-0.76798$ & $1.359403531$\\ \hline
$\circ$ &$-0.338051+  0.941128 i$ & $-0.676101$ & $1.614264278$\\ \hline
$\circ$ &$ -0.258146+0.966106 i$ & $-0.516291$ & $2.403508579$ \\ \hline
$\circ$ &$-0.196502+  0.980503 i$ & $-0.393003$ & $3.722604536 $\\ \hline
$\circ$ &$ -0.113601+0.993526 i$ & $-0.227202$ & $9.60858961$\\ \hline
$\circ$ &$-0.0399928+  0.9992 i$ & $-0.0799857$ & $67.6097739$\\ \hline
$\circ$ &$ 0.038057\, +0.999276 i $ & $0.0761141$ & $63.8633816$\\ \hline
$\circ$ &$ 0.119296\, +  0.992859 i$ & $0.238591$ & $5.41179913$ \\ \hline
$\circ$ & $ 0.187397\, +0.982284 i$ & $0.374793 $ & $1.831789427$ \\ \hline
$\circ$ &$ 0.270042\, +  0.962849 i$ & $0.540083 $ & $0.669766151$\\ \hline
$\circ$ &$ 0.324317\, +0.945948 i$ & $0.648634$ & $0.364749065$ \\ \hline
$\circ$ &$0.398186\, +  0.917305 i$ & $0.796372$ & $0.1470861889$\\ \hline
$\circ$ &$ 0.434658\, +0.900595 i$ & $0.869317 $ & $0.0814873088$ \\ \hline
$\circ$ &$0.482193\, +  0.876065 i$ & $0.964385$ & $0.0188178181$\\ \hline
$\circ$ &$ 0.495536\, +0.868587 i$ & $0.991072$ & $0.0045248460$ \\ \hline
& $ -0.621342 \pm 0.00103034 i$ & $-2.230757 \mp 0.001638478 i$ & $49.99999 \pm 15.35354 i$  \\ \hline 
& $-0.649630 \pm 0.00302325 i$ & $-2.18893 \mp 0.00414036 i$ & $6.335051 \pm 0.5346564 i$\\ \hline
& $-0.712996 + 0.00458062 i$ & $-2.115470751 \mp 0.004429539 i$ & $2.6499001 \pm 0.08729179 i$\\ \hline
& $ -0.825748\pm 0.00436242 i$ & $-2.03674 \mp 0.00203523 i$ & $1.71886598 \pm 0.01444799 i$  \\ \hline 
& $ 0.905426\, \pm 0.00277285 i$ & $2.00987 \mp 0.000609481 i$ & $-0.52058388 \pm 0.001326408 i$\\ \hline
& $0.76209 \pm 0.00484289 i$ & $2.074218042 \mp 0.003495356 i$ & $-0.71548730 \pm 0.014788047 i$ \\ \hline
& $0.676202 \pm 0.00390501 i$ & $2.155000405 \mp 0.004634916 i$ & $-1.3928905 \pm 0.07775814 i$\\ \hline
& $0.631699 \pm 0.00204673 i$ & $2.214715401 \mp 0.003082315 i$ & $-5.106041 \pm 0.7321823 i$\\ \hline
 &$ 0.6179549394855183$ & $2.236195908$ & $864.16$\\ \hline
\end{tabular}
\end{table}

The torsion polynomial is given by 
\[
\begin{split}
\sigma_9(t)
&=
t^{35}-1132 t^{34}+260580t ^{33}-2.68147\times 10^7 t^{32}+1.47405\times 10^9 t^{31}\\
& -4.5380\times 10^{10} t^{30}+7.7496\times 10^{11} t^{29}-7.064\times 10^{12} t^{28}+2.387\times 10^{13} t^{27}\\
&+1.509\times 10^{14} t^{26}-1.968\times 10^{15} t^{25}+7.51\times 10^{15} t^{24}+4.22\times 10^{15} t^{23}\\
&-1.523\times 10^{17} t^{22}+6.24\times 10^{17} t^{21}-1.058\times 10^{18} t^{20}-2.9\times 10^{17} t^{19}\\
&+4.70\times 10^{18}  t^{18}- 7.64\times 10^{18} t^{17}+ 5.\times 10^{17} t^{16}+1.39\times 10^{19} t^{15}\\
&-1.62\times 10^{19} t^{14}- 1.3\times 10^{18} t^{13} +1.61\times 10^{19} t^{12}- 9.5\times 10^{18} t^{11}\\
&-4.19\times 10^{18} t^{10}+ 6.28\times 10^{18} t^9-9.4\times 10^{17} t^8-1.364\times 10^{18} t^7\\
&+ 5.645\times 10^{17} t^6+4.88\times 10^{16} t^5- 6.157\times 10^{16} t^4+1.06336\times 10^{16} t^3\\
&-6.2396\times 10^{14} t^2+ 1.02048\times 10^{13} t-3.4360\times
10^{10}
\end{split}
\]

\subsection{The case of $n=10$}

One has 
\begin{itemize}
\item
$\lambda(M_{10})=-10$, 
\item
$\lambda_{\SL(2;\C)}(M_9)=39$.
\end{itemize}

In this case we find 20 conjugacy classes of $\SU(2)$-representations. 
We do totally 39 conjugacy classes of $\SL(2;\C)$-representations. 
The last one is an $\SL(2;\R)$-representation. 

\begin{table}[htb]
  \begin{tabular}{|c|l|l|l|}\hline
 $\SU(2)$ &  $s$ & $u=s+1/s$ & $\tau_\rho$  \\ \hline
 $\circ$ &$ -0.496377+0.868107 i$ & $-0.992753$ & $1.007339322 $\\ \hline
 $\circ$ &$ -0.48554+  0.874215 i$ & $-0.971079$ & $1.030431375$\\ \hline
 $\circ$ &$ -0.446171+0.894948 i$ & $-0.892342$ & $1.130661131$\\ \hline
 $\circ$ & $-0.416138+  0.909302 i$ & $-0.832276$ & $1.228229667$\\ \hline
 $\circ$ &$ -0.352798+0.9357 i$ & $-0.705596$ & $1.521863518$\\ \hline
$\circ$ & $ -0.307592+  0.951518 i $ &$-0.615185$ & $1.846942653$\\ \hline
$\circ$ &$ -0.234348+0.972153 i$ & $-0.468696$ & $2.797187148$\\ \hline
$\circ$ &$ -0.177045+  0.984203 i$ & $-0.35409$ & $4.43107072$\\ \hline
$\circ$ &$ -0.102597+0.994723 i$ & $-0.205194$ & $11.54680784$\\ \hline
$\circ$ &$ -0.035907+  0.999355 i$ & $-0.0718141$ & $83.2162915$\\ \hline
$\circ$ &$ 0.0343385\, +0.99941 i$ & $0.068677$ & $79.0582830$\\ \hline
$\circ$ &$ 0.107229\, +  0.994234 i$ & $0.214457$ & $6.89542715$ \\ \hline
$\circ$ &$ 0.169577\, +0.985517 i$ & $0.339153$ & $2.352207686$\\ \hline
$\circ$ & $0.244255\, +  0.969711 i$ & $0.488511$ & $0.900301681$\\ \hline
$\circ$ &$ 0.295848\, +0.955235 i$& $0.591697$ & $0.501615796$\\ \hline
$\circ$ &$0.365525\, +  0.930802 i$ &$0.73105$ & $0.225388067$ \\ \hline
$\circ$ &$ 0.403589\, +0.91494 i$ & $0.807179 $ & $0.1361160029$\\ \hline
$\circ$ &$0.457052\, +  0.88944 i$ &$0.914104 $ & $0.0493695567$ \\ \hline
$\circ$ &$ 0.478014\, +0.878352 i$ & $0.956027 $ & $0.0235490782$\\ \hline
$\circ$ &$0.499085\, +  0.866553 i$ & $0.998171$ & $0.0009170979$\\ \hline
& $ -0.62071\pm 0.000834778 i$ & $-2.23176 \mp 0.00133189 i$ & $61.6152 \pm 18.99773 i$ \\ \hline
&$ -0.643412\pm 0.00246298 i$ & $-2.1976 \mp 0.00348646 i$  & $7.703480 \pm 0.6759767 i$ \\ \hline
&$ -0.693202\pm 0.00385058 i$ & $-2.13574 \mp 0.00416238 i$ &  $3.1296446 \pm 0.11879778 i$\\ \hline
&$ -0.779549\pm 0.00431503 i$ & $-2.0623 \mp 0.00278537 i$& $1.9275656 \pm 0.02619426 i$\\ \hline
 &$ -0.914251\pm 0.00229081 i$ & $-2.00804 \mp 0.000449854 i$ & $1.54165231 \pm 0.00241772 i$\\ \hline
  &$0.84037\, \pm 0.00372875 i$ & $2.0303 \mp 0.001551 i$& $-0.56939143 \pm 0.004072021 i$\\ \hline
   &$ 0.731022\, \pm 0.00427557 i$ & $2.09892 \mp 0.00372495 i$ & $-0.83854998 + 0.02192047 i$\\ \hline
   &$ 0.664454\, \pm 0.0032119 i$ & $2.16941 \mp 0.00406293 i$ & $-1.6862274 \pm 0.10076482 i$\\ \hline
   & $0.62906 \pm 0.00166081 i$ & $2.218722556 \mp 0.002536113 i$ & $-6.274506 \pm 0.9124470 i$\\ \hline
 &$0.61797$ & $2.23617$ & $1067.00$\\ \hline
\end{tabular}
\end{table}

In this case 
we see that the torsion polynomial is given by 
\[
\begin{split}
\sigma_{10}(t)
&=
t^{39}- 1400 t^{38}+400500  t^{37}-5.1428\times 10^7 t^{36}+3.5518\times 10^9  t^{35}\\
&-1.39149\times 10^{11}  t^{34}+3.1030\times 10^{12}  t^{33}-3.8975\times 10^{13} t^{32}+2.301\times 10^{14} t^{31}\\
&+4.36\times 10^{14} t^{30}-1.779\times 10^{16}  t^{29}+1.244\times 10^{17}  t^{28}-2.49\times 10^{17}  t^{27}\\
&-1.99\times 10^{18}  t^{26}+1.807\times 10^{19}  t^{25}-6.86\times 10^{19}t^{24}+1.25\times 10^{20}  t^{23}\\
&+2.6\times 10^{19} t^{22}-7.2\times 10^{20}  t^{21}+1.64\times 10^{21} t^{20}-1.09\times 10^{21} t^{19}\\
&-2.45\times 10^{21} t^{18}+6.5\times 10^{21}  t^{17}-5.1\times 10^{21}  t^{16}-2.8\times 10^{21}  t^{15}\\
&+8.8\times 10^{21} t^{14}-5.6\times 10^{21}  t^{13}-2.00\times 10^{21}  t^{12}+4.88\times 10^{21} t^{11}\\
&-2.09\times 10^{21} t^{10}-7.32\times 10^{20}  t^9+9.85\times 10^{20} t^8-2.549\times 10^{20} t^7\\
&-6.95\times 10^{19}  t^6+5.507\times 10^{19} t^5-1.2432\times 10^{19} t^4\\
&+1.21881\times 10^{18} t^3-4.8826\times 10^{16}  t^2+6.4321\times 10^{14} t-5.4976\times 10^{11}
\end{split}
\]

\section{problem}

In \cite{Kitano15-2,Kitano16-1}, 
the torsion polynomial for a Brieskorn homology 3-sphere 
obtained by surgeries along a torus knot, 
which is not exactly same with the one given in this paper, 
it can described by using Chebyshev polynomials of the first kind. 
It seems that it is natural, because any value of $\tau_\rho$ is given by some special values of the cosine function. 

In the case of the figure-eight knots, or in more general cases of hyperbolic knots, 
how to treat the torsion polynomial, it is a problem. 

\vskip 0.5cm
\textit{Acknowledgments}.\/
This study was starting while the author were visiting 
Aix-Marseille University from September, 2015 to March, 2016. 
The author would like to express their sincere thanks for Michel Boileau and Luisa Paoluzzi. 
He also thanks Joan Porti for pointing some errors in the first one. 
This research was supported in part by JSPS KAKENHI  25400101 and 16K05161.

\end{document}